\theoremstyle{plain}
\newtheorem{Thm}{Theorem}[section]
\newtheorem{Lem}[Thm]{Lemma}
\theoremstyle{definition}
\def\finf{\mathop{{\rm I}\kern -.27 em {\rm F}}\nolimits}
\begin{document}

\title{On Domination Number and Distance in Graphs}

\author{{\bf Cong X. Kang}\\
\small Texas A\&M University at Galveston, Galveston, TX 77553, USA\\
{\small\em kangc@tamug.edu}}

\maketitle

\date{}

\begin{abstract}
A vertex set $S$ of a graph $G$ is a \emph{dominating set} if each vertex of $G$ either belongs to $S$ or is adjacent to a vertex in $S$. The \emph{domination number} $\gamma(G)$ of $G$ is the minimum cardinality of $S$ as $S$ varies over all dominating sets of $G$. It is known that $\gamma(G) \ge \frac{1}{3}(diam(G)+1)$, where $diam(G)$ denotes the diameter of $G$. Define $C_r$ as the largest constant such that $\gamma(G) \ge C_r \sum_{1 \le i < j \le r}d(x_i, x_j)$ for any $r$ vertices of an arbitrary connected graph $G$; then $C_2=\frac{1}{3}$ in this view. The main result of this paper is that $C_r=\frac{1}{r(r-1)}$ for $r\geq 3$. It immediately follows that $\gamma(G)\geq \mu(G)=\frac{1}{n(n-1)}W(G)$, where $\mu(G)$ and $W(G)$ are respectively the average distance and the Wiener index of $G$ of order $n$. As an application of our main result, we prove a conjecture of DeLaVi\~{n}a et al.\;that $\gamma(G)\geq \frac{1}{2}(ecc_G(B)+1)$, where $ecc_G(B)$ denotes the eccentricity of the boundary of an arbitrary connected graph $G$.
\end{abstract}

\noindent\small {\bf{Key Words:}} domination number, distance, diameter, spanning tree\\
\small {\bf{2010 Mathematics Subject Classification:}} 05C69, 05C12\\

\section{Introduction}

We consider finite, simple, undirected, and connected graphs $G = (V(G),E(G))$ of order $|V(G)|\ge 2$ and size $|E(G)|$. For $W \subseteq V(G)$, we denote by $\langle W \rangle_G$ the subgraph of $G$ induced by $W$. For $v \in V(G)$, the \emph{open neighborhood} of $v$ is the set $N_G(v)=\{u \in V(G) \mid uv \in E(G)\}$, and the \emph{closed neighborhood} of $v$ is $N_G[v]=N_G(v) \cup \{v\}$. Further, let $N(S)=\cup_{v\in S}N(v)$ and $N[S]=\cup_{v\in S}N[v]$ for $S\subseteq V(G)$. The degree of a vertex $v \in V(G)$ is $\deg_G(v)=|N_G(v)|$.  The \emph{distance} between two vertices $x, y \in V(G)$ in the subgraph $H$, denoted by $d_H(x, y)$, is the length of the shortest path between $x$ and $y$ in the subgraph $H$. The \emph{diameter} $diam(H)$ of a graph $H$ is $\max\{d_H(x,y) \mid x,y \in V(H)\}$.\\

A set $S \subseteq V(G)$ is a \emph{dominating set} (resp. \emph{total dominating set}) of $G$ if $N[S]=V(G)$ (resp. $N(S)=V(G)$). The \emph{domination number} (resp. \emph{total domination number}) of $G$, denoted by $\gamma(G)$ (resp. $\gamma_t(G)$), is the minimum cardinality of $S$ as S varies over all dominating sets (resp. total dominating sets) in $G$; a dominating set (resp. total dominating set) of $G$ of minimum cardinality is called a $\gamma(G)$-set (resp. $\gamma_t(G)$-set). \\

Both distance and (total) domination are very well-studied concepts in graph theory. For a survey of the myriad variations on the notion of domination in graphs, see~\cite{Dom1}.\\

It is well-known that $\gamma(G) \ge \frac{1}{3}(diam(G)+1)$ ($*$); a ``proof\," to ($*$) can be found on p.56 of the authoritative reference~\cite{Dom1}. However, the ``proof\," contained therein is logically flawed. We provide a counter-example to a crucial assertion in the ``proof\," and then present a correct proof to ($*$). Upon some reflection, we see that ($*$) is the two parameter case of a family of inequalities existing between $\gamma(G)$ and the distances in $G$, in the following way: $\gamma(G)\geq \frac{1}{3}(diam(G)+1)=\frac{1}{3{r \choose 2}}\left({r \choose 2}diam(G)+{r \choose 2}\right)\geq \frac{1}{3{r \choose 2}}\left(\sum_{1\leq i<j\leq r}d(x_i,x_j)\right)$. The inequality $\gamma(G)\geq \frac{1}{3{r \choose 2}}\left(\sum_{1\leq i<j\leq r}d(x_i,x_j)\right)$ naturally brings up the question: What is the largest constant $C_r$, such that $\gamma(G)\geq C_r\left(\sum_{1\leq i<j\leq r}d(x_i,x_j)\right)$, for all connected graphs $G=(V,E)$ and arbitrary vertices $x_1, \ldots, x_r \in V$, where $r\geq 2$? Taking this viewpoint, we have $C_2=\frac{1}{3}$ by $(*)$.\\

The main result of this paper is that $C_r=\frac{1}{r(r-1)}$ for $r\geq 3$. Since, for a graph $G$ of order $n$, $W(G)=\sum_{1\leq i<j\leq n}d(x_i,x_j)$ is the \emph{Wiener index} of $G$ (see~\cite{Wiener}) and $\mu(G)=\frac{1}{n(n-1)}W(G)$ is the average distance (per definition found in~\cite{Dank}), %-- the $\mu(G)$ defined in~\cite{Vina-average} is scaled by a natural factor of $2$
it follows that $\gamma(G)\geq \mu(G)=\frac{1}{n(n-1)}W(G)$. As an application of our main result, we prove a conjecture in~\cite{Vina-lower} by DeLaVi\~{n}a et al.\;that $\gamma(G)\geq \frac{1}{2}(ecc_G(B)+1)$, where $ecc_G(B)$ denotes the eccentricity of the boundary of an arbitrary connected graph $G$ (to be defined in Section 4). \\

This paper is motivated by the work of Henning and Yeo in~\cite{HY}, where they obtained similar inequalities for total domination number $\gamma_t$ (rather than domination number $\gamma$). Given the close relation between the two graph parameters, we expect the techniques used in~\cite{HY} to be readily adaptable towards the results of this paper. However, in striking contrast to~\cite{HY}, we avoid the painstaking case-by-case, structural analysis employed there by making use of the easy and well-known Lemma~\ref{stree}; this results in a much simpler and shorter paper. Further, we are able to obtain (in domination) the exact value of $C_r$ for every $r$, rather than only a bound (in total domination, c.f.~\cite{HY}) for $C_r$ for all but the first few values of $r$.

%We have thus solved the problem posed by Dankelmann in~\cite{Dank}, where the author gave a sharp upper bound for $\mu(G)$ in terms of $\gamma(G)$ and $n=|V(G)|$.

%%%%%%%%%%%%%%%%%%%%%%%%%%%%%%%%%%%%%%%%%%%%%%%%%%%%%%%%%%%%%%%%%%%%%%%
%%%%%%%%%%%%%%%%%%%%%%%%%%%%%%%%%%%%%%%%%%%%%%%%%%%%%%%%%%%%%%%%%%%%%%%

\section{An Error in the proof of $\gamma(G) \ge \frac{1}{3}(diam(G)+1)$ in FoDiG}

For readers' convenience, we first reproduce Theorem 2.24 and its incorrect proof as it appears on p.56 of~\cite{Dom1}, the authoritative reference in the field of domination titled \textit{Fundamentals of Domination in Graphs}.

\begin{Thm}\label{gamma-diam}
For any connected graph $G$, $\left\lceil\frac{diam(G)+1}{3}\right\rceil\leq \gamma(G)$.
\end{Thm}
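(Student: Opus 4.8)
The plan is to prove that $\gamma(G) \ge \lceil (diam(G)+1)/3 \rceil$ by extracting a shortest path realizing the diameter and reasoning about how few vertices a dominating set can contribute to covering it.

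First I would set $d = diam(G)$ and fix two vertices $u,v$ with $d(u,v)=d$, together with a shortest $u$–$v$ path $P\colon u=v_0,v_1,\dots,v_d=v$, which has exactly $d+1$ vertices. The structural fact to record at the outset is that every subpath of a geodesic is again a geodesic, so that $d(v_i,v_j)=|i-j|$ for all $0\le i,j\le d$; this is what converts adjacency information into tight index bounds.

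The key lemma I would then establish is that a single vertex $w\in V(G)$ dominates at most three of the vertices $v_0,\dots,v_d$. Indeed, if $w\in N[v_i]\cap N[v_j]$ with $i<j$, then there is a walk from $v_i$ to $v_j$ through $w$ of length at most $2$, whence $|i-j|=d(v_i,v_j)\le 2$ by the previous paragraph. Since this holds for the smallest and largest such indices, all path-vertices dominated by $w$ have indices lying in a window $\{m,m+1,m+2\}$ of three consecutive integers, so there are at most three of them.

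Finally I would combine these observations. Let $S$ be a $\gamma(G)$-set. Every vertex of $P$ must be dominated by some member of $S$, and by the lemma each member of $S$ dominates at most three vertices of $P$, so $3\,|S|\ge d+1$. As $|S|$ is an integer, this gives $\gamma(G)=|S|\ge\left\lceil\frac{d+1}{3}\right\rceil$, which is the claim.

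The step deserving the most care — and presumably the locus of the error in the reproduced proof — is the counting in the last paragraph. One must claim only that each dominator covers \emph{at most} three path-vertices, a bound that remains valid even when $w$ dominates $v_i$ and $v_{i+2}$ but not $v_{i+1}$; it would be a mistake to assume the three dominated vertices are consecutive, or that the dominators may be taken to lie on $P$, and the plan is to phrase the argument so as to avoid any such over-specification.
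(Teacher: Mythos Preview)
Your proof is correct, but it takes a different route from the paper's. The paper first invokes Lemma~\ref{stree} to replace $G$ by a spanning tree $T$ with $\gamma(T)=\gamma(G)$, and then argues that in a \emph{tree} no vertex can have four vertices of a given path in its closed neighborhood (since that would force a cycle). You instead work directly in $G$: the geodesic property $d(v_i,v_j)=|i-j|$ lets you bound $|N[w]\cap V(P)|\le 3$ for any $w$ without ever leaving the graph. Your argument is more elementary and self-contained---it is in fact the corrected version of the flawed FoDiG proof the paper criticizes, with the edge-counting replaced by a clean pigeonhole on path-vertices. The paper's detour through spanning trees buys nothing extra for this particular theorem, but it sets up the reduction that is genuinely needed for the three-vertex result (Theorem~\ref{c3}) later on.
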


``Proof\," (as found on p.56 of~\cite{Dom1}).
Let $S$ be a $\gamma$-set of a connected graph $G$. Consider an arbitrary path of length $diam(G)$. This diametral path includes at most two edges from the induced subgraph $\langle N[v]\rangle$ for each $v\in S$. Furthermore, since $S$ is a $\gamma$-set, the diametral path includes at most $\gamma(G)-1$ edges joining the neighborhoods of the vertices of $S$. Hence, $diam(G)\leq 2\gamma(G)+\gamma(G)-1=3\gamma(G)-1$ and the desired result follows."~\hfill $\Box$ \\

\begin{figure}[htpb]
\begin{center}
\includegraphics[width=.2\textwidth]{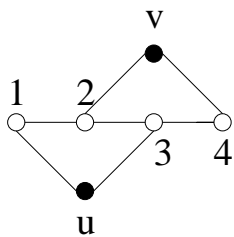}
\caption{a counter-example}\label{figure1}
\end{center}
\end{figure}

Presumably, by a ``diametral path", the authors had in mind an induced path with length $diam(G)$. Still, the assertion of the sentence beginning with ``Furthermore" is incorrect, as seen by the example in Figure~\ref{figure1}: notice that $S=\{u,v\}$ is a $\gamma$-set and the vertices $1,2,3,4$ form a diametral path containing $3$ edges joining $\langle N[u]\rangle$ with $\langle N[v]\rangle$, whereas $\gamma(G)-1=1$.

%%%%%%%%%%%%%%%%%%%%%%%%%%%%%%%%%%%%%%%%%%%%%%%%%%%%%%%%%%%%%%%%%%%%%%%
%%%%%%%%%%%%%%%%%%%%%%%%%%%%%%%%%%%%%%%%%%%%%%%%%%%%%%%%%%%%%%%%%%%%%%%

\section{Domination number and distance in graphs}

The following lemma can be proved by exactly the same argument given in the proof of Lemma 2 in~\cite{DeLaVina}; it was also observed on p.23 of~\cite{Dank}.

\begin{Lem}\cite{Dank, DeLaVina}\label{stree}
Let $M$ be a $\gamma(G)$-set. Then there is a spanning tree $T$ of $G$ such that $M$ is a $\gamma(T)$-set.
\end{Lem}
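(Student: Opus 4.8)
The plan is to exploit two elementary facts about domination: that passing to a spanning subgraph can only raise the domination number, and that domination is preserved when edges are added. First I would record the easy inequality $\gamma(T)\ge\gamma(G)$, valid for \emph{every} spanning tree $T$ of $G$. Indeed $V(T)=V(G)$ and $E(T)\subseteq E(G)$, so any dominating set of $T$ is also a dominating set of $G$ (a neighbor in $T$ is a neighbor in $G$), and hence the minimum over dominating sets of $T$ is at least $\gamma(G)$. This reduces the lemma to producing a single spanning tree $T$ in which the given $\gamma(G)$-set $M$ is a dominating set: once we have such a $T$, the chain $\gamma(T)\le|M|=\gamma(G)\le\gamma(T)$ forces equality, so $M$ is a $\gamma(T)$-set.

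To build $T$, I would first assign to each vertex $v\in V(G)\setminus M$ one neighbor $f(v)\in M$, which exists because $M$ dominates $G$, and form the forest $F$ on vertex set $V(G)$ whose edge set is exactly $\{vf(v):v\in V(G)\setminus M\}$. By construction $F$ spans $G$, is acyclic (every edge runs between $M$ and its complement, and each vertex of $V(G)\setminus M$ is incident to a single such edge), and is dominated by $M$, since each vertex of $V(G)\setminus M$ is adjacent in $F$ to its chosen neighbor in $M$.

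Next I would grow $F$ into a spanning tree using the connectedness of $G$. As long as $F$ has more than one component, connectedness supplies an edge of $G$ joining two distinct components, and adding it merges them without creating a cycle; iterating until a single component remains produces a spanning tree $T$ with $E(F)\subseteq E(T)$. Because enlarging the edge set can only enlarge closed neighborhoods, $N_T[M]\supseteq N_F[M]=V(G)$, so $M$ is still a dominating set of $T$, and the reduction of the first paragraph finishes the proof.

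The only point needing care — and the closest thing to an obstacle — is checking that the two construction steps never conflict: that the initial star-forest is genuinely acyclic and spanning, and that the component-merging step both preserves acyclicity and terminates with all of $V(G)$ connected. Both are immediate from the bipartite-like structure of $F$ and from the connectedness of $G$, so the argument needs no case analysis, which is exactly the advantage over the structural approach of~\cite{HY}.
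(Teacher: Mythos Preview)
Your argument is correct and is precisely the standard one: assign each vertex outside $M$ a single edge to a neighbor in $M$ to obtain a spanning star-forest dominated by $M$, then extend to a spanning tree using connectedness of $G$ and conclude via $\gamma(G)\le\gamma(T)\le|M|=\gamma(G)$. The paper itself does not supply a proof but cites \cite{Dank,DeLaVina}, remarking that ``exactly the same argument given in the proof of Lemma~2 in \cite{DeLaVina}'' applies; that argument is essentially the one you wrote, so there is no substantive difference to report. (Your closing remark about avoiding the structural approach of \cite{HY} is a bit misplaced here---that contrast pertains to the paper's main theorem, not to this lemma.)
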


Now, we apply Lemma~\ref{stree} to give a correct proof of Theorem~\ref{gamma-diam}. \\

Proof of Theorem~\ref{gamma-diam}.
Given $G$, take a spanning tree $T$ of $G$ such that $\gamma(G)=\gamma(T)$. Suppose, for the sake of contradiction, $\gamma(G)<\frac{1}{3}(diam(G)+1)$. Since $\gamma(T)=\gamma(G)$ and $diam(T) \ge diam(G)$, we have
\begin{equation}\label{eq1}
\gamma(T) < \frac{1}{3}(diam(T)+1)
\end{equation}
Take a path $P$ of $T$ with length equal to $diam(T)$. If (\ref{eq1}) holds, there must exist a vertex $u$ of $T$ such that $|V(P) \cap N[u]| \ge 4$. Since $P$ is a path of $T$ (a tree), this is impossible.~\hfill $\Box$\\

\begin{Thm}\label{c3}
Given any three vertices $x_1,x_2,x_3$ of a connected graph $G$, we have
\begin{equation}\label{eq2}
\gamma(G) \ge \frac{1}{6}(d_G(x_1, x_2)+d_G(x_1, x_3)+d_G(x_2, x_3)).
\end{equation}
Further, if equality is attained in~(\ref{eq2}), then $d_G(u, v)\equiv 2 \pmod 3$ for any pair $u,v\in\{x_1,x_2,x_3\}$.
\end{Thm}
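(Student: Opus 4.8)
The plan is to mirror the strategy used in the corrected proof of Theorem~\ref{gamma-diam}, namely to reduce the problem to a tree via Lemma~\ref{stree} and then argue combinatorially about how a dominating set can cover the relevant paths. First I would invoke Lemma~\ref{stree} to obtain a spanning tree $T$ of $G$ with $\gamma(G)=\gamma(T)$, and I would record that $d_T(x_i,x_j)\geq d_G(x_i,x_j)$ for every pair, since distances cannot decrease when passing to a subgraph. Hence it suffices to prove the inequality in the tree $T$, where distances are realized by unique paths; if I can show $\gamma(T)\geq\frac{1}{6}\bigl(d_T(x_1,x_2)+d_T(x_1,x_3)+d_T(x_2,x_3)\bigr)$, the bound in~(\ref{eq2}) follows immediately.

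The combinatorial heart of the argument is to understand the union of the three pairwise paths in $T$. In a tree the three paths connecting $x_1,x_2,x_3$ meet in a unique ``median'' vertex (the Steiner point) $m$, so that $T$ contains a subtree shaped like three branches $P_1,P_2,P_3$ emanating from $m$, where $P_k$ is the path from $m$ to $x_k$ of length $a_k:=d_T(m,x_k)$. The sum $d_T(x_1,x_2)+d_T(x_1,x_3)+d_T(x_2,x_3)$ then equals $2(a_1+a_2+a_3)$, so the target inequality becomes $\gamma(T)\geq\frac{1}{3}(a_1+a_2+a_3)$. Now I would use the key fact, exactly as in the proof of Theorem~\ref{gamma-diam}, that in a tree any single vertex $u$ satisfies $|V(P)\cap N[u]|\leq 3$ for a path $P$, since a vertex's closed neighborhood can contain at most three consecutive vertices of an induced path. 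The remaining point is to convert this local bound into a global one for the \emph{spider} $Y=P_1\cup P_2\cup P_3$ rather than a single path: I would count the vertices of $Y$ covered by a $\gamma(T)$-set $M$, noting that $Y$ has $a_1+a_2+a_3+1$ vertices and each $u\in M$ dominates at most three of them, with the only delicate case being a vertex $u$ at or near the center $m$ whose closed neighborhood might reach into all three branches.

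Writing $|Y|=a_1+a_2+a_3+1$ and using that $M$ dominates every vertex of $Y$, the counting gives $3\gamma(T)=3|M|\geq |V(Y)|=a_1+a_2+a_3+1$, whence $\gamma(T)\geq\frac{1}{3}(a_1+a_2+a_3)$, which is precisely what is needed. The main obstacle I anticipate is the center vertex $m$: unlike an interior vertex of a single path, a vertex located at $m$ can have degree three within $Y$ and thus a closed neighborhood meeting all three branches, so I must verify that even then $|N[u]\cap V(Y)|\leq 3$ holds (it does, since $N[u]$ contributes $u$ itself plus at most one neighbor along each of the at most three incident edges of $Y$, but I must confirm no double counting inflates the total beyond $3$, and handle short branches $a_k\in\{0,1\}$ and the degenerate case where some $x_k=m$ separately). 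For the equality clause, I would trace back the counting: equality forces $3|M|=a_1+a_2+a_3+1$ exactly, so $a_1+a_2+a_3+1\equiv 0\pmod 3$ and, more stringently, the covering must be perfectly efficient on $Y$ with every dominating vertex accounting for exactly three vertices of $Y$ along a single branch; analyzing the residues this perfect packing imposes on each branch length should yield $a_i+a_j=d_T(x_i,x_j)\equiv 2\pmod 3$, and since $d_G=d_T$ under equality this transfers back to $G$.
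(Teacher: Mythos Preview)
Your plan contains a concrete error precisely at the point you flag as ``the main obstacle.'' You assert that for $u=m$ one still has $|N[u]\cap V(Y)|\leq 3$, but your own parenthetical count (``$u$ itself plus at most one neighbor along each of the at most three incident edges of $Y$'') gives $1+3=4$, not $3$. Indeed, when all three branch lengths satisfy $a_k\geq 1$, the median $m$ has degree three in $Y$ and $|N[m]\cap V(Y)|=4$, so the inequality $3|M|\geq|V(Y)|$ is false as written. The repair is simple but essential: in a tree, $m$ is the \emph{unique} vertex whose closed neighborhood can meet $Y$ in four points (any $u\in V(Y)\setminus\{m\}$ has degree at most $2$ in $Y$, and any $u\notin V(Y)$ is adjacent to at most one vertex of the subtree $Y$). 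Hence $\sum_{u\in M}|N[u]\cap V(Y)|\leq 3|M|+1$, and since this sum is at least $|V(Y)|=a_1+a_2+a_3+1$, you recover $\gamma(T)\geq\frac{1}{3}(a_1+a_2+a_3)$ as desired. The extra ``$+1$'' is not slack you can discard: the star $K_{1,3}$ (where $a_1=a_2=a_3=1$) attains equality.

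The correction also propagates into your equality analysis. Equality $3\gamma(T)=a_1+a_2+a_3$ now forces $m\in M$ with $|N[m]\cap V(Y)|=4$, every other $u\in M$ contributing exactly $3$ vertices to $V(Y)$, and the sets $N[u]\cap V(Y)$ pairwise disjoint. After $N[m]$ absorbs $m$ together with the first vertex of each branch, the remaining $a_k-1$ vertices on branch $k$ must be tiled exactly by blocks of three consecutive vertices, whence $a_k\equiv 1\pmod 3$ for each $k$ and $d_T(x_i,x_j)=a_i+a_j\equiv 2\pmod 3$; your transfer back to $G$ via $d_G=d_T$ under equality is correct. For comparison, the paper does not use this global count: it instead splits into cases according to whether the median belongs to a $\gamma(T)$-set and reduces each case to Theorem~\ref{gamma-diam} applied to one or two sub-paths of the spider. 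Once patched at the center, your counting route is the more direct of the two.
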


\begin{proof}
By Lemma~\ref{stree}, there exists a spanning tree $T$ of $G$ with $\gamma(T)=\gamma(G)$. Since $d_T(u,v) \ge d_G(u,v)$ for any two vertices $u,v \in V(T)=V(G)$, it suffices to prove (\ref{eq2}) on $T$. If $x_1,x_2$, and $x_3$ all lie on one geodesic, then the inequality~(\ref{eq3}) obviously holds by Theorem~\ref{gamma-diam}. Thus, let $d_T(x_1, y)=a$, $d_T(x_2, y)=b$, and $d_T(x_3, y)=c$, with $0\notin \{a,b,c\}$, as shown in Figure~\ref{figure2}. Then, the inequality~(\ref{eq2}) on $T$ becomes
\begin{equation}\label{eq3}
\gamma(T)\geq\frac{1}{6}((a+b)+(a+c)+(b+c))=\frac{1}{3}(a+b+c).
\end{equation}
 Let $y'$ be the vertex lying on the $x_2$-$y$ path and adjacent to $y$. Let $P^1$ and $P^2$ denote the $x_1$-$x_3$ path and the $x_2$-$y'$ path, respectively. If there exists a $\gamma(T)$-set $M$ Not containing $y$, then $M$ must contain a neighbor $z$ of $y$. Suppose, WLOG, $z\neq y'$. Then, inequality~(\ref{eq3}) follows immediately from applying Theorem~\ref{gamma-diam} to $P^1$ and $P^2$. If $y$ belongs to every $\gamma(T)$-set $M$, then $\gamma(T) \ge 1+\frac{1}{3}(a-1)+\frac{1}{3}(b-1)+\frac{1}{3}(c-1)=\frac{1}{3} (a+b+c)$, and (\ref{eq3}) again follows.

\begin{figure}[htpb]
\begin{center}
\includegraphics[width=.4\textwidth]{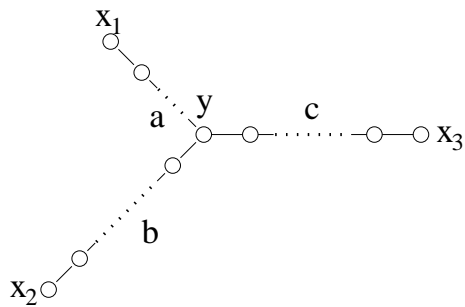}
\caption{$r=3$ case}\label{figure2}
\end{center}
\end{figure}

Next, suppose equality is attained in~(\ref{eq2}). Again, let $T$ be a spanning tree with $\gamma(T)=\gamma(G)$. Since $d_G(x_i,x_j)\leq d_T(x_i,x_j)$ and $\gamma(T)\geq\frac{1}{6}(d_T(x_1, x_2)+d_T(x_1, x_3)+d_T(x_2, x_3))$ holds, we have $\gamma(G)=\frac{1}{6}(d_G(x_1, x_2)+d_G(x_1, x_3)+d_G(x_2, x_3))\leq\frac{1}{6}(d_T(x_1, x_2)+d_T(x_1, x_3)+d_T(x_2, x_3))\leq \gamma(T)$. Thus, we deduce that $\gamma(T)=\frac{1}{6}(d_T(x_1, x_2)+d_T(x_1, x_3)+d_T(x_2, x_3))$ and $d_G(x_i,x_j)=d_T(x_i,x_j)$ for each pair $(x_i,x_j)$. With $a, b, c$ defined as above, the present assumption implies $\gamma(T)=\frac{1}{3}(a+b+c)$. Observe, in light of Theorem~\ref{gamma-diam}, that the equality $\gamma(T)=\frac{1}{3}(a+b+c)$ is only possible if the following ``optimal domination" of $T$ occurs: there is a $\gamma(T)$-set $M$ which contains $y$, a degree-three vertex in $\langle V(P^1)\cup V(P^2) \rangle_T$ which dominates four or more vertices in $T$; every other vertex of $M$ dominates three or more vertices in $T$; no vertex of $T$ is dominated by more than one vertex of $M$. (Note that Figure~\ref{figure2} only shows $\langle V(P^1)\cup V(P^2) \rangle_T$, which may be a strict subgraph of $T$.) This ``optimal domination" condition clearly implies that each member of $\{a,b,c\}$ must equal $1\pmod 3$, which yields our second claim. ~\hfill
\end{proof}

Next, we determine the largest $C_r$ for $r \ge 3$ with the method deployed in~\cite{HY}. However, rather than just getting a bound on $C_r$ in the case of total domination there, we obtain the exact value of $C_r$ for every $r$.

\begin{Thm}
For $r \ge 3$, $C_r=\displaystyle\frac{1}{r^2-r}$.
\end{Thm}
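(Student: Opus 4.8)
The statement packages two claims: first, that the inequality
\[
\gamma(G)\ \ge\ \frac{1}{r(r-1)}\sum_{1\le i<j\le r} d(x_i,x_j)
\]
holds for every connected graph $G$ and every choice of vertices $x_1,\dots,x_r$ (which yields $C_r\ge \frac{1}{r(r-1)}$); and second, that this constant cannot be improved ($C_r\le \frac{1}{r(r-1)}$). The plan is to prove the inequality by induction on $r$, taking Theorem~\ref{c3} as the base case $r=3$, and to settle sharpness with a single explicit graph.

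For the inductive step, suppose $r\ge 4$ and that the inequality, with its constant $\frac{1}{(r-1)(r-2)}$, is already known for every choice of $r-1$ vertices of every connected graph. Given $x_1,\dots,x_r$, I would apply the induction hypothesis to each of the $r$ sub-collections $S_k=\{x_1,\dots,x_r\}\setminus\{x_k\}$ \emph{inside the same graph} $G$, so that $\sum_{1\le i<j\le r,\ i,j\ne k} d(x_i,x_j)\le (r-1)(r-2)\gamma(G)$ for every $k$. Summing these $r$ inequalities and using that each pair $\{i,j\}$ contributes to the inner sum for precisely the $r-2$ indices $k\notin\{i,j\}$, one obtains
\[
r(r-1)(r-2)\,\gamma(G)\ \ge\ \sum_{k=1}^{r}\ \sum_{1\le i<j\le r,\ i,j\ne k} d(x_i,x_j)\ =\ (r-2)\sum_{1\le i<j\le r} d(x_i,x_j).
\]
Cancelling the factor $r-2>0$ gives exactly $\gamma(G)\ge \frac{1}{r(r-1)}\sum_{1\le i<j\le r} d(x_i,x_j)$, completing the induction and establishing $C_r\ge \frac{1}{r(r-1)}$.

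For sharpness I would exhibit the star with center $c$ adjacent to $r$ leaves $x_1,\dots,x_r$: here $\gamma=1$ while $d(x_i,x_j)=2$ for all $i\ne j$, so $\sum_{1\le i<j\le r} d(x_i,x_j)=2\binom{r}{2}=r(r-1)=r(r-1)\gamma$. Hence no constant larger than $\frac{1}{r(r-1)}$ can satisfy the defining inequality, giving $C_r\le \frac{1}{r(r-1)}$ and therefore equality.

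With Theorem~\ref{c3} already supplying the sharp base constant $\frac16$, the inductive step is pure double counting, and the only point needing care is that the constants telescope precisely: the multiplicity $r-2$ produced by the pair-counting is exactly what cancels the $r-2$ manufactured by the $r$ applications of the hypothesis. I therefore do not expect a genuine obstacle here, since the substantive work — the sharp $r=3$ estimate with its ``optimal domination'' analysis — has already been carried out. An alternative route, closer in spirit to~\cite{HY}, would instead pass to a spanning tree via Lemma~\ref{stree} and argue directly on the Steiner tree of $x_1,\dots,x_r$; but the inductive averaging sidesteps all structural case analysis and is the approach I would take.
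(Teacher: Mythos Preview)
Your proof is correct and follows essentially the same approach as the paper: the same star example for the upper bound, and for the lower bound the same double-counting idea leveraged from Theorem~\ref{c3}. The only cosmetic difference is that the paper skips the induction and sums the $r=3$ inequality directly over all $\binom{r}{3}$ triples (each pair appearing in $r-2$ of them), yielding the constant in one step rather than via the chain $r-1\to r$.
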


\begin{proof}
First, we prove $C_r \le \displaystyle\frac{1}{r^2-r}$. Let $G=K_{1,r}$ be a star with $r$ leaves labeled $x_1,\ldots, x_r$. Then $\gamma(G)=1$ and $$\sum_{1\le i < j \le r}d(x_i, x_j)={r  \choose 2} \cdot 2=r(r-1).$$ So, $C_r \le \frac{1}{r(r-1)}$.

Next, we show that $C_r \ge \displaystyle\frac{1}{r^2-r}$. Notice that $C_3=\frac{1}{3(3-1)}=\frac{1}{6}$ is given by Theorem~\ref{c3}. Thus, let $x_1, x_2, \ldots, x_r$ be any arbitrary $r\geq 4$ vertices of $G$. Since $\gamma(G) \ge \frac{1}{6}(d_G(x_i, x_j)+d_G(x_i, x_k)+d_G(x_j, x_k))$ holds for any triplet $\{x_i, x_j, x_k\}\subseteq \{x_1, x_2, \dots, x_r\}$, we have
$${r \choose 3} \gamma(G) \ge \sum_{1 \le i < j < k \le r}\frac{1}{6}(d_G(x_i, x_j)+d_G(x_i, x_k)+d_G(x_j, x_k))=\frac{r-2}{6}\sum_{1 \le i < j \le r}d(x_i, x_j)\, ;$$
note that the last equality comes from the fact that there are $r\!-\!2$ triplets containing any given pair of vertices. Thus, $\displaystyle C_r \ge\frac{1}{{r \choose 3}}\frac{r-2}{6}=\frac{1}{r(r-1)}$ as well.~\hfill
\end{proof}

\section{Applying Theorem~\ref{c3} to a Conjecture of DeLaVi\~{n}a et al.}

We need a few more definitions. The \emph{eccentricity} of a vertex $v$ in $G$, denoted by $ecc_G(v)$, is $\max\{d_G(v,x)\;|\;x\in V(G)\}$. The \emph{boundary} of $G$ is defined as the set $B(G)=\{v\in V(G)\;|\;ecc_G(v)=diam(G)\}$; we denote it simply as $B$ hereafter. The distance between a vertex $v\in V(G)$ and a set $S\subseteq V(G)$ is defined as $d_G(v,S)=\min\{d_G(v,x)\;|\;x\in S\}$. Further, the eccentricity of $S\subseteq V(G)$ is defined as $ecc_G(S)=\max\{d_G(x,S)\;|\;x\in V(G)\}$. \\

In~\cite{Vina-lower}, DeLaVi\~{n}a et al.\;proved, \emph{for a tree $G$}, that $\gamma(G)\geq \frac{1}{2}(ecc_G(B)+1)$. They further conjectured that the inequality holds for any connected graph $G$. As an application of Theorem~\ref{c3}, we prove this conjecture. Our proof follows the arguments given by Henning and Yeo in~\cite{HY} proving the analogous Graffiti.pc conjecture $\gamma_t(G)\geq \frac{2}{3}(ecc_G(B)+1)$.\\

\begin{Thm}\label{gamma-eccen}
Let $G$ be a connected graph. Then $\displaystyle\gamma(G)\geq \frac{1}{2}(ecc_G(B)+1)$.
\end{Thm}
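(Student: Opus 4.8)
The plan is to deduce the bound from Theorem~\ref{c3} applied to a single, carefully chosen triple of vertices, together with a parity/integrality observation; no spanning-tree manipulation is needed beyond what is already packaged inside Theorem~\ref{c3}. Write $e=ecc_G(B)$ and assume $e\ge 1$ (if $e=0$ then $B=V(G)$ and the claim reads $\gamma(G)\ge\tfrac12$, which is immediate). Since $V(G)$ is finite and $B\neq\emptyset$, I would choose a vertex $v$ realizing $d_G(v,B)=e$ and let $b\in B$ be a nearest boundary vertex, so that $d_G(v,b)=e$. Because $b\in B$ we have $ecc_G(b)=diam(G)$, so I pick a vertex $a$ with $d_G(a,b)=diam(G)$. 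The first thing to record is that $a$ itself lies in $B$: indeed $ecc_G(a)\ge d_G(a,b)=diam(G)$, and since no eccentricity can exceed $diam(G)$ this forces $ecc_G(a)=diam(G)$, i.e.\ $a\in B$.

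The triple $\{v,a,b\}$ is then controlled from three sides. The choice of $b$ gives $d_G(v,b)=e$. Since $a\in B$ and $d_G(v,B)=e$ is the \emph{minimum} distance from $v$ to $B$, we get $d_G(v,a)\ge e$. Finally I would establish the strict inequality $diam(G)\ge e+1$: certainly $diam(G)\ge d_G(v,b)=e$, and if $diam(G)=e$ then $d_G(v,b)=diam(G)$ would give $ecc_G(v)=diam(G)$ and hence $v\in B$, contradicting $d_G(v,B)=e\ge 1$. Feeding these three estimates into Theorem~\ref{c3} yields
\[ \gamma(G)\ge\tfrac16\bigl(d_G(v,a)+d_G(v,b)+d_G(a,b)\bigr)\ge\tfrac16\bigl(e+e+(e+1)\bigr)=\tfrac{3e+1}{6}. \]

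The last step is where the real work hides, and it is the step I expect to be the main obstacle: the bound just obtained only gives $\gamma(G)\ge\tfrac{3e+1}{6}$, which falls short of the desired $\gamma(G)\ge\tfrac{e+1}{2}=\tfrac{3e+3}{6}$ by exactly $\tfrac13$, so a plain application of Theorem~\ref{c3} is not enough. I would close this gap by integrality: rewriting the inequality as $6\gamma(G)\ge 3e+1$ and noting that $6\gamma(G)-3e$ is a multiple of $3$ that is strictly positive, we conclude $6\gamma(G)-3e\ge 3$, that is $\gamma(G)\ge\tfrac{e+1}{2}=\tfrac12(ecc_G(B)+1)$. Thus the sharp constant is recovered only because both ingredients are present: the extra unit $diam(G)\ge e+1$ coming from $v\notin B$, and the divisibility-by-three argument that upgrades $3e+1$ to $3e+3$. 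As a consistency check I would verify that short paths such as $P_3$ (where $e=1$ and $\gamma=1$) meet the bound with equality, confirming that the triple $\{v,a,b\}$ is the right configuration and that neither the strict gain nor the parity step can be dropped.
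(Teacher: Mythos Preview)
Your proof is correct and follows the same overall plan as the paper: pick a triple consisting of two diametral (hence boundary) vertices together with a vertex $v$ realizing $ecc_G(B)$, apply Theorem~\ref{c3} to obtain $\gamma(G)\ge\tfrac{3e+1}{6}$, and then use integrality to close the remaining gap to $\tfrac{e+1}{2}$. The one genuine difference is in that last step. The paper splits into cases according to whether the sum of the three distances equals exactly $3e+1$; in the equality case it invokes the second (mod-$3$) clause of Theorem~\ref{c3}---noting that $d(x,y)=d(x,z)+1$ cannot leave both distances $\equiv 2\pmod 3$---to force strict inequality, and in the strict case it gets strictness directly; either way $\gamma(G)>\tfrac12 e+\tfrac16$ and integrality of $\gamma$ finishes. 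Your divisibility observation, that $6\gamma(G)-3e=3(2\gamma(G)-e)$ is a multiple of $3$ which is at least $1$ and hence at least $3$, is a neater route to the same conclusion: it dispenses with the case split and uses only the inequality part of Theorem~\ref{c3}, never its equality characterization.
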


\begin{proof}
If $B=V(G)$, then $ecc_G(B)=0$ and the desired inequality obviously holds. So, suppose $B\neq V(G)$; this implies that $|V(G)|\geq 3$ and $|B|\geq 2$. Pick vertices $x$ and $y$ with $d(x,y)=diam(G)$; then, $x,y\in B$. Let $ecc_G(B)=R$. Pick $z\in V(G)\setminus B$ such that $d(z,B)=R$. We have $d(x,z)\geq R, \ d(y,z)\geq R$ and $d(x,y)=diam(G)\geq R+1$. Hence, we have $d(x,y)+d(x,z)+d(y,z)\geq 3R+1$ $(\spadesuit)$. If equality holds in $(\spadesuit)$, then $R=d(x,z)=d(y,z)=d(x,y)-1$, and we can Not have both $d(x,z)$ and $d(x,y)$ be congruent to $2$ mod $3$. In this case, by Theorem~\ref{c3}, we have that $\gamma(G)>\frac{1}{6}(d(x,y)+d(x,z)+d(y,z))=\frac{1}{6}(3R+1)=\frac{1}{2}R+\frac{1}{6}$, which implies $\gamma(G)\geq\frac{1}{2}R+\frac{1}{2}$. On the other hand, if the inequality $(\spadesuit)$ is strict, again by Theorem~\ref{c3}, we have that $\gamma(G)\geq\frac{1}{6}(d(x,y)+d(x,z)+d(y,z))>\frac{1}{6}(3R+1)=\frac{1}{2}R+\frac{1}{6}$, which again implies $\gamma(G)\geq\frac{1}{2}R+\frac{1}{2}$.~\hfill
\end{proof}

%%%%%%%%%%%%%%%%%%%%%%%%%%%%%Suppose, for the sake of contradiction, that there exist vertices $x_1, x_2, x_3$ on $T$ such that $\gamma(T)<\frac{1}{6}[d_T(x_1, x_2)+d_T(x_1, x_3)+d_T(x_2, x_3)]\ (\diamondsuit)$.
%%%%%%%%%%%%%%%%%%%%%%%%%%%%%then either $|M_1|<\frac{1}{3}d(x_1,x_3)$ or $|M_2|<\frac{1}{3}d(x_2,y)$, where $M_1=M\cap V(P^1)$ and $M_2=M\cap V(P^2)$ respectively dominate $P^1$ and $P^2$. This is impossible, as argued in the proof of Theorem~\ref{gamma-diam}

\end{document}